\renewcommand{\k}{\mathbb{k}}
\theoremstyle{definition}
\newtheorem{theorem}{Theorem}[section]
\newtheorem{theoremx}{Theorem}
\numberwithin{equation}{section}
\newtheorem{lemma}[theorem]{Lemma}
\theoremstyle{definition}
\newtheorem{definition}[theorem]{Definition}
\newtheorem{example}[theorem]{Example}
\newtheorem{remark}[theorem]{Remark}
\newtheoremstyle{TheoremNum}
{8pt}{8pt}              
{\upshape}                      
{}                              
{\bfseries}                     
{.}                             
{.5em}                             
{\theoremname{#1}\theoremnote{ \bfseries #3}}
\theoremstyle{TheoremNum}
\newcommand{\m}{\mathfrak{m}}
\newcommand{\CC}{\mathbb{C}}
\newcommand{\cC}{\mathfrak{C}}
\newcommand{\Rank}{\operatorname{rank}}
\newcommand{\Hom}{\operatorname{Hom}}
\newcommand{\C}{\mathfrak{C}}
\newcommand{\quot}{\operatorname{Quot}}
\renewcommand{\leq}{\leqslant}
\renewcommand{\geq}{\geqslant}
\newcommand{\ds}{\displaystyle}
\newcommand{\ps}[1]{\llbracket {#1} \rrbracket}
\newcommand{\h}{\operatorname{h}}
\newcommand{\hun}{\displaystyle \h(\Omega_{R})}
\newcommand{\Tr}{\text{tr}_R}
\title{Two Criteria for Quasihomogeneity}
\address{Department of Mathematics, University of Utah, Salt Lake City, UT, USA}
\author[Maitra]{Sarasij Maitra}
\email{maitra@math.utah.edu}
\address{Department of Mathematics, Indian Institute of Technology Delhi, Hauz Khas, India.}
\author[Mukundan]{Vivek Mukundan}
\email{vmukunda@iitd.ac.in}
\subjclass[2010]{Primary: 13A15. Secondary: 13H05}
\keywords{module of differentials, Berger Conjecture, reduced curves}
\begin{document}

	\begin{abstract}
Let $(R,\m_R,\k)$ be a one-dimensional complete local reduced $\k$-algebra over a field of characteristic zero. The ring $R$ is said to be quasihomogeneous  if there exists a surjection $\Omega_R\twoheadrightarrow \m$ where $\Omega_R$ denotes the module of differentials. We present two characterizations of quasihomogeniety of $R$. The first one on the valuation semigroup of $R$ and the other on the trace ideal of the module $\Omega_R$.
	\end{abstract}
	\maketitle

 \section{Introduction}
Let $(R,\m,\k)$ be a one dimensional complete local domain where $\k$ is algebraically closed field of characteristic zero. We can identify $R$ with $\k\ps{X_1,\dots,X_n}/I$ where $I$ is called the defining ideal of $R$. Scheja in \cite{scheja1970differentialmoduln} defines $R$ to be \textit{quasihomogeneous} if there exists a surjection $\Omega_R\twoheadrightarrow \m$ where $\Omega_R$ is the module of differentials. When $I$ is generated by polynomials, this definition coincides with the standard definition of quasihomogeneity found in literature:  i.e., $I$ is generated by quasihomogeneous polynomials (\cite[Satz 2.1]{KunzRuppert77}). A polynomial $f$ is said to be quasihomogeneous of degree $d$, if there exists weights $w_1,\dots,w_n\in\mathbb{N}$ such that $f(\lambda^{w_1}x_1,\dots,\lambda^{w_n}x_n)=\lambda^d f(x_1,\dots,x_n)$ for $\lambda\in\k$.  Scheja  proved  Berger's conjecture (\cite{Berger_article}) for quasihomogeneous rings in \cite{scheja1970differentialmoduln}. A more general version of this result appeared recently in \cite[Theorem 4.11]{huneke2021torsion}. Thus identifying when the ring $R$ is \textit{quasihomogeneous} is indeed important and it is desirable to get easy methods to recognize the quasihomogeneous property for $R$. One of the first results in this direction is due to Zariski (\cite{Zariski66}) who showed that for  irreducible plane algebroid curves, $R$ is quasihomogeneous when the module of differentials $\Omega_R$ has maximal torsion. Saito (in \cite{Saito71}) showed that if $R$ has an isolated hypersurface singularity ($I=(f)$) then $R$ is quasihomogeneous when $f$ is in the ideal generated by the partial derivatives $\frac{\partial f}{\partial x_1},\dots,\frac{\partial f}{\partial x_n}$.  The most often used characterization of quasihomogeneous rings defined by polynomials, is due to Kunz and Ruppert \cite[Satz 3.1]{KunzRuppert77}. They showed that $R$ is quasihomogeneous if and only if $R$ is isomorphic to a numerical semigroup ring $\k\ps{H}$ where $H$ is a numerical semigroup. 

In this article, we present two new characterizations of quasihomogeneity of $R$. For the first characterization we make use of the conductor ideal $\C_R=\overline{R}:_{\quot(R)}R$ where $\quot(R)$ is the quotient field of $R$ and $\overline{R}$ is the integral closure of $R$ in $\quot(R)$.  Since $\overline{R}=\k\ps{t}$ we can identify $R$ with $\k\ps{t^{a_1},\dots,t^{a_n}}$. We can define an order valuation $v(\sum\alpha_it^i)=\min\{ j~|~a_j\neq 0\}$. A subsequent valuation can be defined as $o(\sum_{i=0}^\infty\alpha_it^i)=v\left(\sum_{i=0}^\infty\alpha_it^i-\alpha_0\right)$. We use this valuation as a check for quasihomogeneity.
 \begin{theoremx}\label{quasihomogeneous and valuations}
	Let $(R,\m,\k)$ be a non-regular, complete, local one dimensional domain which is a $\k$-algebra with $\k$ algebraically closed of characteristic $0$. Let $\overline{R}=\k\ps{t}$ with the conductor of $R$ given by $\mathfrak{C}_R=(t^{c_R})\overline{R}$. Writing $R=\k\ps{\alpha_1t^{a_1},\ldots, \alpha_nt^{a_n}}$ with $a_1<a_2<\cdots<a_n$, set 
	\begin{align*}
	o(\alpha_r)=\min_{1\leq i\leq n}\{o(\alpha_i) \}, \qquad a=\min_{j\neq r}\{a_j\}.
	\end{align*}
	Then $R$ is quasihomogeneous  if $o(\alpha_r)+a\geq c_R$.
\end{theoremx}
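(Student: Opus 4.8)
The strategy is to reduce, via the Kunz--Ruppert characterization recalled above, to showing that $R$ becomes a numerical semigroup ring after a change of variable. Since $\overline R=\k\ps{t}$, it suffices to produce a $\k$-algebra automorphism $\phi$ of $\k\ps{t}$ (so $\phi(t)=\tau$ with $v(\tau)=1$) for which $\phi(R)$ is a monomial subring $\k\ps{H}$; then $R\cong\k\ps{H}$ is quasihomogeneous. After multiplying the given generators $g_i:=\alpha_it^{a_i}$ by units of $\k$ we may assume each $\alpha_i$ has constant term $1$, which leaves $o(\alpha_i)$, and hence $r$ and $a$, unchanged; thus $g_i=t^{a_i}+h_i$ with $v(h_i)\geq a_i+o(\alpha_i)$. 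If every $\alpha_i$ is constant then $R$ is already a monomial ring and there is nothing to prove, so assume $o(\alpha_r)<\infty$.

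The first step is to straighten the single generator $g_r$. As $\k$ has characteristic $0$ and $\alpha_r$ has constant term $1$, the series $\sigma(t):=t\,\alpha_r(t)^{1/a_r}\in\k\ps{t}$ is well defined with $v(\sigma)=1$ and $\sigma(t)^{a_r}=t^{a_r}\alpha_r(t)=g_r$. Let $\tau$ be the compositional inverse of $\sigma$ and let $\phi$ be the automorphism of $\k\ps{t}$ with $\phi(t)=\tau$; then $\phi(g_r)=g_r(\tau)=\sigma(\tau)^{a_r}=t^{a_r}$. Since $v(\alpha_r-1)=o(\alpha_r)$, a direct computation gives $v(\sigma(t)-t)\geq 1+o(\alpha_r)$, hence $v(\tau-t)\geq 1+o(\alpha_r)$ as well; equivalently $\tau=t(1+\eta)$ with $v(\eta)\geq o(\alpha_r)$.

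Next, track $\phi$ on the remaining generators. For $j\neq r$, writing $g_j=t^{a_j}\alpha_j$ and using that $o(\alpha_j)\geq o(\alpha_r)$ by the choice of $r$, the factors $(1+\eta)^{a_j}$ and $\alpha_j(\tau)$ are both of the form $1+(\text{series of order}\geq o(\alpha_r))$, so
\[
\phi(g_j)=\tau^{a_j}\alpha_j(\tau)=t^{a_j}(1+\eta)^{a_j}\,\alpha_j(\tau)=t^{a_j}+\xi_j,\qquad v(\xi_j)\geq a_j+o(\alpha_r).
\]
Here the hypothesis enters: for every $j\neq r$ we have $a_j\geq a$, so $v(\xi_j)\geq a+o(\alpha_r)\geq c_R$, i.e. $\xi_j\in t^{c_R}\overline R=\mathfrak{C}_R$.

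To finish, note that $\phi|_R\colon R\xrightarrow{\ \sim\ }\phi(R)$ is a ring isomorphism and the conductor is an isomorphism invariant, so $\mathfrak{C}_{\phi(R)}=\phi(\mathfrak{C}_R)=\tau^{c_R}\overline R=t^{c_R}\overline R$; in particular $t^m\in\phi(R)$ for all $m\geq c_R$. Hence each $\xi_j\in\phi(R)$, so $t^{a_j}=\phi(g_j)-\xi_j\in\phi(R)$ for all $j$, and $\phi(R)$ contains $t^{a_1},\dots,t^{a_n}$ together with all $t^m$, $m\geq c_R$. Conversely each generator $\phi(g_i)$ lies in the closed $\k$-subalgebra $B:=\k\ps{t^{a_1},\dots,t^{a_n}}+t^{c_R}\overline R$ (using $\xi_j\in t^{c_R}\overline R\subseteq B$), so $\phi(R)=B$. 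But $B$ is the numerical semigroup ring $\k\ps{H}$ for the (readily verified) numerical semigroup $H:=\langle a_1,\dots,a_n\rangle\cup\{m\in\NN:m\geq c_R\}$, so by Kunz--Ruppert $R\cong\k\ps{H}$ is quasihomogeneous. I expect the delicate point to be the order bookkeeping of the third paragraph, and the conceptual crux to be the last step: invariance of the conductor under isomorphism is exactly what lets the tail terms $\xi_j$ be absorbed back into $\phi(R)$, so that a single change of variable — straightening only $g_r$ — already monomializes $R$.
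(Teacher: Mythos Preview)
Your argument is correct and follows essentially the same route as the paper: straighten the generator $g_r$ via the substitution built from $\alpha_r^{1/a_r}$ and then absorb the remaining tails into the conductor, concluding that $R$ is a numerical semigroup ring. The packaging is slightly cleaner than the paper's---you treat both cases $r=1$ and $r\geq 2$ at once by working with an automorphism $\phi$ of $\overline{R}$ and invoking invariance of the conductor, whereas the paper handles $r\geq 2$ without any change of variable and, for $r=1$, inverts the substitution $s=\beta t$ by hand---but the underlying idea is the same.
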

This provides a quick way to check quasihomogeneity of $R$. Of course, the downside is that we need to have information on the valuation $c_R$ which, in general can be hard to compute. But this result gives a quick numerical characterization of quasihomogeneity and easily deployable in computational packages such as Macaulay2.

For the second characterization, we use the invariant defined in \cite{maitra2020partial}. We use the description of the trace ideal, $\Tr(\Omega_R)=\sum \alpha(\Omega_R)$ where $\alpha\in\Omega_R^*=\Hom_R(\Omega_R,R)$. Since the valuation function $v(\cdot)$ can be extended to all fractional ideal in $\overline{R}$, we get another characterization of the quasihomogeneity using the valuation of the trace ideal $\Tr(\Omega_R)$. Since $R$ is quasihomogeneous, we have $v(\m)\geq v(\Tr(\Omega_R))$, by definition. In fact since $R$ is non-regular, this is an equality. However, the converse is unclear (see \cite[Conclusion 3]{maitra2020partial}). We settle this completely in the following result.
\begin{theoremx}
    Let $(R,\m,\k)$ be a non-regular complete local one dimensional domain with $\k$ algebraically closed of characteristic $0$. Let $\Omega_R$ be the module of differentials and $\overline{R}=\k\ps{t}$ with valuation function $v$. Then $R$ is quasihomogeneous if and only if $v(\Tr(\Omega_R))=v(\m).$
\end{theoremx}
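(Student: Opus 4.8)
The plan is to express both sides of the equivalence through a single fractional ideal and then to see that the equality $v(\Tr(\Omega_R))=v(\m)$ is an a priori weaker, ``infinitesimal'' version of the Kunz--Ruppert criterion \cite{KunzRuppert77}. Identify $\Omega_{\overline R}=\overline R\,dt$ with $\overline R$ via $dt\mapsto 1$ and let $D\subseteq\overline R$ be the image of the canonical map $\Omega_R\to\Omega_{\overline R}$, so $D\cong\Omega_R/\tau(\Omega_R)$ is a nonzero fractional ideal of $R$ generated over $R$ by $\{f'\mid f\in R\}$ (here $'=d/dt$). Since $R$ is a domain every $R$-linear map $\Omega_R\to R$ kills torsion, so $\Omega_R^{\ast}=\Hom_R(D,R)\cong D^{-1}:=R:_{\quot(R)}D$ and $\Tr(\Omega_R)=\Tr(D)=D\,D^{-1}$; under $\Omega_R^{\ast}\cong\operatorname{Der}_{\k}(R)$ the element $h\in D^{-1}$ gives the derivation $f\mapsto hf'$. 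Because $v$ is the valuation of the DVR $\overline R$, $v(D\,D^{-1})=v(D)+v(D^{-1})$, and because $\Char\k=0$ we have $v(f')=v(f)-1$ for non-constant $f$, so $v(D)=v(\m)-1$; hence $v(\Tr(\Omega_R))=v(\m)$ is equivalent to $v(D^{-1})=1$. On the other hand, by Kunz--Ruppert $R$ is quasihomogeneous iff $R=\k\ps{\sigma^{h}\mid h\in H}$ for some uniformizer $\sigma$ of $\overline R$ and numerical semigroup $H$; since the eigenvectors of the Euler derivation $E_\sigma:=\sigma\partial_\sigma$ on $\overline R=\k\ps{\sigma}$ are exactly the monomials $\sigma^m$ --- so every $E_\sigma$-stable complete $\k$-subalgebra of $\k\ps{\sigma}$ is spanned by monomials --- this holds iff $E_\sigma(R)\subseteq R$ for some uniformizer $\sigma$, i.e.\ (writing $E_\sigma=(\sigma/\sigma')\partial_t$, with $v(\sigma/\sigma')=1$) iff $D^{-1}$ contains an element $h=\sigma/\sigma'$ of valuation $1$. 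So the theorem reduces to: $v(D^{-1})=1$ forces $D^{-1}$ to contain such an $h$.

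One direction is then essentially free; along the way one also gets $v(\Tr(\Omega_R))\geq v(\m)$ for every non-regular $R$ (so the theorem asserts that $v(\Tr(\Omega_R))$ is as large as possible): if some $h\in D^{-1}$ had $v(h)\leq 0$, testing $hg_i'\in R$ on generators $g_1,\dots,g_n$ of $R$ with $v(g_i)=a_i$ the minimal generators of $v(R)$ (here $n\geq 2$ since $R$ is non-regular) forces $v(h)=1-a_1$, hence $a_2-a_1\in v(R)$, i.e.\ $a_1\mid a_2$, contradicting minimality of $a_2$. For the nontrivial direction, take $h\in D^{-1}$ with $v(h)=1$; since the reciprocal of its leading coefficient lies in $\k\subseteq R$ we may assume $h\in t+t^2\k\ps{t}$. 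Then $\delta:=h\,\partial_t$ satisfies $\delta(R)\subseteq R$ and, as $v(h)\geq 1$, also $\delta(\overline R)\subseteq\overline R$. Using $\Char\k=0$ I would solve the ODE $h\phi'=\phi$ for $\phi\in t+t^2\k\ps{t}$ (writing $h=tu$ with $u(0)=1$ and $1-u=tw$, the choice $\phi=t\exp(\int w/u)$ works) and set $\sigma:=\phi$, a uniformizer of $\overline R$; then $\delta(\sigma)=h\phi'=\phi=\sigma$, so $\delta=\sigma\partial_\sigma=E_\sigma$ on $\overline R=\k\ps{\sigma}$. Thus $R$ is a complete $\k$-subalgebra of $\k\ps{\sigma}$ stable under $E_\sigma$, so by the eigenspace observation $R=\k\ps{\sigma^{s}\mid s\in S'}$ for a numerical semigroup $S'$, whence $R$ is quasihomogeneous by Kunz--Ruppert. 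Combining: $R$ quasihomogeneous $\iff v(D^{-1})=1\iff v(\Tr(\Omega_R))=v(\m)$.

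The hard part is exactly this last extraction --- producing quasihomogeneity from the bare existence of a valuation-one element of $D^{-1}$. Conceptually, such an element is a vector field on $\Spec\overline R$ whose linear part at the closed point is the Euler field; one linearizes it --- legitimate since $\Char\k=0$ and the eigenvalue $1$ admits no resonances --- to the Euler field of a new uniformizer $\sigma$, and then Kunz--Ruppert's monomiality criterion in the coordinate $\sigma$ rather than $t$ concludes. The remaining points --- that $S'$ is a numerical semigroup (one uses $\C_R\subseteq R$ for cofiniteness), that the formal conjugacy is a genuine change of uniformizer, and that the $\Hom$/trace identifications above behave as claimed --- are routine and I would isolate them as lemmas.
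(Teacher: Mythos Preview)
Your argument is correct and takes a genuinely different route from the paper. Both proofs begin by reducing the statement to $v(\mathcal D^{-1})=1$ via $\Tr(\Omega_R)=\mathcal D\mathcal D^{-1}$ and $v(\mathcal D)=v(\m)-1$, but they diverge at the converse. The paper invokes the length formula $\hun=\ell(\overline R/\mathcal D)-\ell(\overline R/R)+v(\mathcal D^{-1})\leq v(\mathcal D^{-1})$ from \cite{maitra2020partial}: once $v(\mathcal D^{-1})=1$ this gives $\hun\leq 1$, and $\hun=0$ would force regularity, so $\hun=1$ and $\Omega_R$ surjects onto $\m$. Your approach instead interprets an element $h\in\mathcal D^{-1}$ with $v(h)=1$ as a derivation $h\partial_t$ of $R$ whose linear part at the origin is nondegenerate, linearizes it in characteristic~$0$ by solving $h\phi'=\phi$ to produce a new uniformizer $\sigma$ with $h\partial_t=\sigma\partial_\sigma$, and then uses a Vandermonde/conductor argument to see that an $E_\sigma$-stable complete subalgebra is monomial in $\sigma$. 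The paper's proof is shorter but leans on machinery developed elsewhere; yours is self-contained and in fact recovers the Kunz--Ruppert characterization $R\cong\k\ps{H}$ as a byproduct rather than citing it. One small streamlining: for the forward direction you do not need to pass through Kunz--Ruppert and the separate inequality $v(\mathcal D^{-1})\geq 1$. A surjection $\phi\colon\Omega_R\twoheadrightarrow\m$ already corresponds, under $\Omega_R^{*}\cong\mathcal D^{-1}$, to some $h$ with $h\mathcal D=\m$, whence $v(h)=v(\m)-v(\mathcal D)=1$ directly.
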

\subsection*{Acknowledgements}  We thank Prof. Craig Huneke for discussions regarding both the main results in the article. We deeply thank the referee for pointing out various improvements (especially Theorem 3.1) to the article.
 \section{Preliminaries}
Throughout this  article, $(R,\m,\k)$ will denote a non-regular complete one dimensional local domain which is a $\k$-algebra with $\k$ algebraically closed of characteristic $0$. Hence $R={P}/{I}$ where $P=\k\ps{X_1,\cdots, X_n}, n\geq 2$ and $I\subseteq \m_P^2=(X_1,\ldots,X_n)^2$, a prime ideal. We denote $x_i$ to be the image of $X_i$.

Notice that in this situation, the integral closure $\overline{R}$ of $R$ in its fraction field, $Q=\operatorname{Frac}(R)$, is a Discrete Valuation Ring ($DVR$) by \cite[Theorem 4.3.4]{SwansonHuneke}. Thus, henceforth we fix a uniformizing parameter $t$ and denote $\overline{R}=\k\ps{t}$. Using the inclusion $R\subseteq \overline{R}$, we can write every element of $R$ as a power series in $t$. Thus, $R=\ps{\alpha_1t^{a_1}, \ldots, \alpha_nt^{a_n}}$ (i.e., $x_i=\alpha_it^{a_i}$) where each $\alpha_i$ is a unit in $\k\ps{t}$. We also arrange  $a_1<a_2<\dots<a_n$. 

For each power series $p(t)=\sum_{i=0}^\infty c_it^{i}$, we define the order valuation $v(p(t)):=\min\{i\mid c_i\neq 0\}$. When $p(t)=0$, define $v(0)=\infty$. Thus, $v(x_i)=a_i$ for each $i$. We denote $v(R)$ to be the valuation semigroup of $R$. Notice that this extends to a discrete valuation on $Q$ by setting $v(p/q)=v(p)-v(q)$, where $p,q\in R$, $q\neq 0$. 

For each fractional ideal $I$, i.e., a finitely generated $R$-submodule of $Q$, we denote $$v(I):=\min\{v(y)\mid y\in I\}.$$
Also, for each fractional ideal $I$, we denote $I^{-1}:=R:_Q I=\{y\in Q\mid yI\subseteq R\}$.

One of the ideals which is important for our purposes is the conductor $\cC_R$. It is defined to be $\cC_R=R:_Q\overline{R}=\{\alpha\in Q\mid \alpha \overline{R}\subseteq R\}$. One can show that it is the largest common ideal of $R$ and $\overline R $. 
Since $\ds \overline{R}=k\ps{t}$, we have that $\cC_R=(t^i)_{i\geq c_R}\overline{R}$. Here $c_R$ is the smallest integer such that $t^{c_R-1}\not \in R$, and $t^{c_R+i}\in R$ for all $i\geq 0$. It is clear from this  discussion that there cannot be any element $r\in R$, such that $v(r)=c_R-1$.  Since $\ds \overline{R}$ is finitely generated over $R$ (\cite[Theorem 4.3.4]{SwansonHuneke}),  $\cC_R\neq 0$, and it is never equal to $R$ unless $R$ is regular.

\subsection{Universally Finite Module of Differentials}

\begin{definition}\label{defmoduleofdiff}
	Let $(R,\m,\k), P, I$ be as above. Let $I=(f_1,\dots,f_m)$ where $f_j\in P=\k\ps{X_1,\dots,X_n}, n\geq 2$. Then the \textit{universally finite module of differentials over $\k$}, denoted by $\Omega_{R}$, has a (minimal) presentation given as follows:
	$$R^{m}\xrightarrow{\left[\frac{\partial f_j}{\partial x_i}\right]} R^n\to \Omega_{ R}\to 0$$ where $\left[\frac{\partial f_j}{\partial x_i}\right]$ is the Jacobian matrix of $I$, with entries in $R$. 
\end{definition}
For further details regarding the module of differentials, we refer the reader to the excellent book by E. Kunz \cite{Kunzbook}. 

\subsection{Quasihomogeneous Rings}
\begin{definition}(\cite[Satz 9.8]{scheja1970differentialmoduln}, \cite[Definition 3]{Berger_article})\label{defquasihomogeneous}
    Let $(R,\m,\k)$ be a complete local one dimensional non-regular domain which is a $\k$-algebra where $\k$ is algebraically closed of characteristic $0$. Let $\Omega_R$ denote the module of differentials as in \Cref{defmoduleofdiff}. Then $R$ is called \textbf{quasihomogeneous} if there exists an exact sequence  $$\Omega_R\to \m\to 0.$$
\end{definition}
Notice that the hypotheses on $R$ in \Cref{defquasihomogeneous} can be relaxed to quite an extent and the surjection from $\Omega_R$ to $\m$ is the essential part (for instance, one may only require $R$ to be reduced with its $\m$-adic completion being a domain). However, for the purposes of this article, we stick to the above definition. 

\begin{example}
    Let $R=\mathbb{C}\ps{X,Y,Z}/(XZ-Y^2,X^3-Z^2)\cong \CC\ps{t^4,t^5,t^6}$ is quasihomogeneous. First notice that $\Omega_R=RdX\oplus RdY\oplus RdZ/U$ where $U$ is the submodule spanned by $zdX-2ydY+xdZ,3x^2dX-2zdZ$. The map $$dX\to 4x, dY\to 5y, dZ\to 6z$$ defines an $R$-linear surjection to the maximal ideal $(x,y,z)$ of $R$.
\end{example}
\begin{remark}\label{remark on quasi}
For rings in the above situation, the map $R$ to $R$ which is given by $x\mapsto \text{deg}(x)x$ for any homogeneous element $x$ (known as the Euler derivation map) lifts to a surjection from $\Omega_R$ to $\m$ (known as the Euler homomorphism) (see the discussion following \cite[Theorem 2.3]{herzog1994module} and also the proof of \cite[Theorem 3.7]{herzog1994module}). 

Rings of the form $\k\ps{t^{b_1},\dots,t^{b_n}}$ are all quasihomogeneous. To see this, first fix a presentation $\k\ps{t^{b_1},\dots,t^{b_n}}\cong \k\ps{X_1,\dots,X_n}/J$. Now we can define the Euler homomorphism $\Omega_R\rightarrow R $ as $dX_i\rightarrow b_ix_i$ which leads to the surjection $\Omega_R\twoheadrightarrow\m$.
\end{remark}



\section{Quasihomogeneity via order valuation on units}
In this section, we prove our first main result. we recall the following notion of order valuation on the units which was utilised to solve partial cases of a long standing conjecture, in a recent work, see \cite[Notation 4.1, Theorem 4.2]{maitra2023valuations}.

\subsection{Order Valuation of Units}
The units in $\overline{R}$ are of the form $\alpha=\sum_{i=0}^\infty c_it^i$ with $c_0\neq 0, c_i\in \k$. Let $\alpha_j=\sum_{i=0}^\infty c_{ji}t^i$ be such a unit. Then the order valuation of $\alpha_j$ is given by 
\begin{align*}
		o(\alpha_j)=v(\alpha_j-c_{j0})=\min_{i\geq 1}\{i~|~c_{ji}\neq 0 \}.
	\end{align*}
 \begin{theorem}\label{quasihomogeneous and valuations}
	Let $(R,\m,\k)$ be a non-regular, complete, local one dimensional domain which is a $\k$-algebra with $\k$ algebraically closed of characteristic $0$. Let $\overline{R}=\k\ps{t}$ with the conductor of $R$ given by $\mathfrak{C}_R=(t^{c_R})\overline{R}$. Writing $R=\k\ps{\alpha_1t^{a_1},\ldots, \alpha_nt^{a_n}}$ with $a_1<a_2<\cdots<a_n$, set 
	\begin{align*}
	o(\alpha_r)=\min_{1\leq i\leq n}\{o(\alpha_i) \}, \qquad a=\min_{j\neq r}\{a_j\}.
	\end{align*}
	If $o(\alpha_r)+a\geq c_R$, then $R$ is quasihomogeneous.
\end{theorem}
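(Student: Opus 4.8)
The plan is to construct an explicit surjection $\Omega_R \twoheadrightarrow \m$ under the numerical hypothesis $o(\alpha_r) + a \geq c_R$. The natural candidate is a "perturbed Euler homomorphism": define a map $\psi \colon \Omega_R \to R$ by sending $dx_i \mapsto a_i x_i$ (the naive Euler assignment using the $t$-orders $a_i$ rather than the true weights, which we do not know exist). The first thing to check is that $\psi$ is well-defined, i.e. that $\psi$ kills the image of the Jacobian matrix: for each defining relation $f_j(x_1,\dots,x_n)=0$ we must verify $\sum_i a_i x_i \frac{\partial f_j}{\partial x_i} \in$ (the appropriate submodule). Here the key observation is that $\sum_i a_i x_i \frac{\partial f_j}{\partial x_i}$ is, up to the substitution $x_i = \alpha_i t^{a_i}$, the $t$-derivation $t \frac{d}{dt}$ applied to $f_j(\alpha_1 t^{a_1}, \dots, \alpha_n t^{a_n}) = 0$, minus the correction terms coming from $t\frac{d\alpha_i}{dt}$. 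So $\psi$ fails to be the zero map on relations only by terms involving $t \alpha_i' / \alpha_i$, and these have $t$-order at least $o(\alpha_i) \geq o(\alpha_r)$ times something in $\m$; I expect that $o(\alpha_r)+a \geq c_R$ forces these correction terms into the conductor $\mathfrak{C}_R \subseteq R$, hence into a submodule small enough that the map can be corrected.

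More precisely, the second step is to show that the cokernel of $\psi$ (or of $\psi$ composed with the inclusion $\m \hookrightarrow R$, after checking $\Ima \psi \subseteq \m$) is killed once we work modulo $\mathfrak{C}_R$. The surjectivity onto $\m$ should then be reduced to a statement about the valuation semigroup $v(R)$: the image of $\psi$ contains $a_i x_i = a_i \alpha_i t^{a_i}$ for each $i$, and since $\Char \k = 0$ each $a_i$ is a nonzero scalar, so $\Ima\psi$ contains an element of $v$-value $a_i$ for every generator. Combined with the fact that every element of $\m$ of $v$-value $\geq c_R$ lies in $\mathfrak{C}_R \subseteq \Ima\psi$ (this is where the conductor hypothesis does the real work — it guarantees $\mathfrak{C}_R$ is contained in the image), a standard induction on $v$-value shows $\Ima\psi = \m$, because the "missing" values below $c_R$ are exactly the generators $a_1,\dots,a_n$ and sums thereof, all of which are hit.

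The main obstacle I anticipate is the well-definedness computation: controlling the error term $\sum_i (t\alpha_i'/\alpha_i)\, x_i \frac{\partial f_j}{\partial x_i}$ and proving it lies in the $R$-submodule generated by the columns of the Jacobian (equivalently, that the assignment descends to $\Omega_R$). The inequality $o(\alpha_r) + a \geq c_R$ should be precisely calibrated so that each such error term has $t$-order $\geq c_R$ and therefore — being an element of $R$ already — sits in $\mathfrak{C}_R$; but one must be careful that "lies in $\mathfrak{C}_R$ as an element of $R$" translates correctly into "lies in the relevant submodule of $R^n$", which may require modifying $\psi$ on a complementary piece or invoking that $\mathfrak{C}_R \cdot \Omega_R$ behaves well. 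A secondary technical point is handling the unit $\alpha_i$ versus its reciprocal when passing between $\frac{\partial}{\partial x_i}$ and $t\frac{d}{dt}$; writing $t\frac{d}{dt} = \sum_i a_i x_i (1 + t\alpha_i'/(a_i\alpha_i)) \frac{\partial}{\partial x_i}$ cleanly and bounding the parenthetical correction is the crux.
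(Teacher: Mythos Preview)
Your plan has a real gap at the well-definedness step. For the assignment $dx_i\mapsto a_ix_i$ to descend from $R^n$ to $\Omega_R$ you need $\sum_i a_ix_i\,\partial f_j/\partial x_i=0$ in $R$ for every relation $f_j$, not merely that this element lies in $\mathfrak{C}_R$. Applying $t\,d/dt$ to $f_j(x)=0$ gives
\[
\sum_i a_ix_i\,\frac{\partial f_j}{\partial x_i}\;=\;-\sum_i t^{a_i+1}\alpha_i'\,\frac{\partial f_j}{\partial x_i},
\]
which is generically nonzero, so the naive Euler assignment simply is not a map out of $\Omega_R$. No ``submodule of $R^n$'' enters here; the target is $R$, and the Jacobian columns must go to $0$.

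The correct way to realize your idea is to use the derivation $D=t\,d/dt$ itself: if $D$ maps $R$ into $R$, then it factors through $\Omega_R$ via $dx_i\mapsto D(x_i)=a_ix_i+t^{a_i+1}\alpha_i'$, and your valuation argument then shows the image is $\m$. The condition $D(R)\subseteq R$ amounts to $t^{a_i+1}\alpha_i'\in R$ for every $i$, and since $v(t^{a_i+1}\alpha_i')=a_i+o(\alpha_i)$ this is guaranteed once $a_i+o(\alpha_i)\geq c_R$ for all $i$. When $r\geq 2$ that inequality follows from $a_i+o(\alpha_i)\geq a_1+o(\alpha_r)=a+o(\alpha_r)\geq c_R$, so your strategy can be made to work in that case. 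But when $r=1$ the hypothesis only gives $o(\alpha_1)+a_2\geq c_R$; since $a_1<a_2$, the quantity $o(\alpha_1)+a_1$ can drop below $c_R$, and then $D(x_1)$ need not lie in $R$ at all. There is no way to ``correct modulo $\mathfrak{C}_R$'' here, because the problem is that the candidate value $D(x_1)$ is outside $R$, not that some relation is nonzero inside $R$.

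The paper avoids this obstruction by taking a different route: rather than building a surjection $\Omega_R\twoheadrightarrow\m$ directly, it shows that $R$ is literally a numerical semigroup ring $\k\ps{u^{a_1},\dots,u^{a_n}}$ for a suitable uniformizer $u$, after which quasihomogeneity is immediate from the ordinary Euler map. For $r\geq 2$ one checks that each $t^{a_j}=\alpha_{j0}^{-1}\bigl(x_j-(\text{tail of valuation}\geq c_R)\bigr)$ already lies in $R$, so $u=t$ works. For $r=1$ one first reparametrizes by $s=\alpha_1^{1/a_1}t$ (so that $x_1=s^{a_1}$ on the nose, using Hensel to extract the root) and then runs the same conductor-tail argument in the variable $s$. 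This change of uniformizer is precisely the missing ingredient in your plan for the $r=1$ case.
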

\begin{proof}
As before, we write $x_i=\alpha_it^{a_i}$.  Now assume that $r\geq 2$. Then $a=a_1$. Choose any $1\leq j\leq n$. Then $o(\alpha_j)+a_j\geq o(\alpha_r)+a\geq c_R$. Now $$x_j=\alpha_jt^{a_j}=\alpha_{j0}t^{a_j}+t^{o(\alpha_j)+a_j}h$$ where $h\in \overline{R}$. Since $o(\alpha_j)+a_j\geq c_R$, $t^{o(\alpha_j)+a_j}\in \cC_R\subseteq R$. Thus, $t^{a_j}=\frac{1}{\alpha_{j0}}(x_j-t^{o(\alpha_j)+a_j}h)\in R$. Thus, $R=\k\ps{t^{a_1},\ldots,t^{a_n}}$ and the quasihomogeneity of $R$ follows from \Cref{remark on quasi}.


Finally suppose that $r=1$. Then $a=a_2$. Thus, we get that $o(\alpha_j)+a_j\geq o(\alpha_1)+a_2\geq c_R$ for all $j\geq 2$. Also, we may assume that $\alpha_1=1+\sum_{k\geq 1}\alpha_{1k}t^k$ where $\alpha_{1k}\in\k$. Now let $\beta=\alpha_1^{1/a_1}=1+\sum_{k\geq 1}\beta_kt^{o(\alpha_1)+k-1},\beta_k\in\k$, a unit in $\overline{R}$ (such a $\beta$ exists due to Hensel's Lemma; see for example \cite[Section 2]{maitra2023valuations}). We change the parameter to 
\begin{align*}
s=\beta t =    t+\beta_1t^{o(\alpha_1)+1}+\beta_2t^{o(\alpha_1)+2}+\cdots
\end{align*}

We now identify $R$ inside $\k\ps{s}$. Notice that $s^{c_R-1}\not \in R$ but $s^{c_R+i}\in R$ for all $i\geq 0$ using the relation $s=\beta t$. In other words, the conductor valuation stays the same. We denote the ring $R$ as $\k\ps{x_1,\ldots, x_n}$ where each $x_i$ has a representation both in terms of $s$ and $t$, depending on the choice of the parameter. Notice that both $s^{c_R+i}$ and $t^{c_R+i}$ can be written purely in terms of $x_1,\ldots,x_n$ and consequently $t^{c_R+i}$ can be written purely in terms of $s$ (with valuation equal to $c_R+i$). 

Our goal is to write $t=\gamma s$ for some $\gamma\in\k\ps{s}$ which establishes that $\k\ps{t}=\k\ps{s}$. Notice that
\begin{align*}
s&=\beta t =    t+\beta_1t^{o(\alpha_1)+1}+\beta_2t^{o(\alpha_1)+2}+\cdots\\
\beta_1s^{o(\alpha_1)+1}&=\beta_1t^{o(\alpha_1)+1}+\beta_1((o(\alpha_1)+1)\beta_1)t^{2o(\alpha_1)+1}+\cdots.
\end{align*}
Thus
\begin{align}\label{theorem 3.1 eq1}
  s-\beta_1s^{o(\alpha_1)+1}&=(t+  \beta_2t^{o(\alpha_1)+2}+\cdots) -(\beta_1((o(\alpha_1)+1)\beta_1)t^{2o(\alpha_1)+1}+\cdots)\\
  &=\beta' t\text{ for }\beta'\in\k\ps{t}.\nonumber
\end{align}
Notice that $o(\beta')$ is at least $o(\alpha_1)+1$ or $2o(\alpha_1)$. In either case, we see that $o(\beta')\geq o(\alpha_1)+1$. We keep repeating this process of subtracting terms involving higher powers of $t$ in the expression \eqref{theorem 3.1 eq1} to arrive at
\begin{align*}
    s-f(s)=t+\sum_{k\geq 0}\eta_k t^{c_R+k}\text{ where }f(s)\in (s) \k\ps{s}, \eta_k\in\k.
\end{align*}
Now as explained in the paragraphs above, $t^{c_R+k}=g_k(s)\in (s)\k\ps{s}$ (i.e., they can be written purely in terms of $s$). Thus we arrive at
\begin{align*}
    s-f(s)-\sum_{k\geq 0}\eta_kg_k(s)=t.
\end{align*}
Thus $t=\gamma s,\gamma\in\k\ps{s}$. Now notice that $o(\gamma)=o(\alpha_1)=o(\beta)$.

Now $s^{a_1}=(\beta t)^{a_1}=\alpha_1t^{a_1}=x_1$. Also
\begin{align*}
    x_i=\alpha_it^{a_i}&=\alpha_{i0}t^{a_i}+\alpha_{i1}t^{o(\alpha_i)+a_i}+\alpha_{i2}t^{o(\alpha_i)+a_i+1}+\cdots\\
    &=\alpha_{i0}(\gamma s)^{a_i}+\alpha_{i1}(\gamma s)^{o(\alpha_i)+a_i}+\alpha_{i2}(\gamma s)^{o(\alpha_i)+a_i+1}+\cdots\\
    &=\alpha'_{i0}s^{a_i}+\alpha'_{i1}s^{o(\alpha_1)+a_i}+\cdots\\
    &=\alpha_i's^{a_i}.
\end{align*}
Notice that for all $i\geq 2$, $o(\alpha_1)+a_i\geq o(\alpha_1)+a_2\geq c_R$. Thus following the same method as in the beginning of this proof, we can rewrite  $x_i=s^{a_i}$. Since this can be performed for all $x_i$, we have $R=\k\ps{s^{a_1},\dots,s^{a_n}}$. Thus quasihomogeneity of $R$ now follows from \Cref{remark on quasi}.
\end{proof}
\begin{example}
	Let $R=\k\ps{t^4+t^5,t^7,t^8,t^9}$. Then the conductor $\mathfrak{C}_R=(t^7)\overline{R}$. Here $o(\alpha_1)=1$ and $a=7$. Since $o(\alpha_1)+a\geq c=7$,  $R$ is a quasihomogeneous ring. Here $R=\k\ps{s^{4},s^7,s^8,s^9}$.
\end{example}
\begin{example}
    Let $R=\k\ps{t^5,t^6,t^8+t^9}$. Then \cite[Page 207,(1)]{Waldi79} shows that $R$ is not quasihomogeneous. Notice that $o(\alpha_3)+a_1=1+5=6<10=c_R$.
\end{example}
\section{Quasihomogeneity via trace ideal of $\Omega_R$}
We recall the following notions and results which will be crucial in establishing our second main result. 

\begin{definition}
   Let $R$ be a local domain with fraction field $Q$. Then the \textit{rank} of any module $M$ is defined to be $\Rank(M):=\dim_Q (M\otimes_ R Q)$. 
\end{definition}

\begin{definition}
Let $M$ be a finitely generated $R$-module. Then the \textit{trace ideal} of $M$, denoted $\text{tr}_R(M)$, is the ideal $\sum \alpha (M)$ where $\alpha $ ranges over $M^*:=\Hom_R(M,R)$. An ideal $I$ is said to be a trace ideal if $I=\text{tr}_R(M)$ for some module $M$. 
\end{definition}
Clearly, $I\subseteq \Tr(I)$. For further details on trace ideals and their properties, we refer the reader to resources such as \cite[Proposition 2.4]{kobayashi2019rings} and \cite{LindoTrace}. we shall need the following properties in this work.
\begin{lemma}\label{traceprop}
    Let $(R,\m,\k)$ be a one dimensional non-regular complete local domain which is a $\k$-algebra, where $\k$ is algebraically closed of characteristic $0$. Let $\overline{R}=\k\ps{t}$ with $v$ the order valuation. Let $I$ be a fractional ideal of $R$, i.e., an $R$-submodule of $Q$. Then the following statements hold.
    \begin{enumerate}

\item For any $R$-module $N$, we have that $\Tr(\Tr(N))=\Tr(N)$. 
    \item  Suppose $J$ is another fractional ideal that is isomorphic to $I$. Then $\text{tr}_R(I)=\text{tr}_R(J)$
        \item $\Tr(I)=II^{-1}$. 

        \item $v(\Tr(I))=v(I)+v(I^{-1})$.

        \item Suppose $M$ is a finitely generated $R$-module of rank one. Suppose $f:M\to R$ is any non-zero map and let $I=f(M)$. Then $\Tr(M)=\Tr(I)$.
    \end{enumerate}

\end{lemma}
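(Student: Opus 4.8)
\textbf{Proof proposal for Lemma~\ref{traceprop}.}

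The plan is to verify the five statements more or less independently, leaning on the basic dictionary between fractional ideals of $R$ inside $Q$ and their $R$-duals, together with the fact that $\overline R=\k\ps t$ is a DVR so that every nonzero fractional ideal $I$ of $\overline R$ is of the form $t^{v(I)}\overline R$. For (1), I would recall that $\Tr(N)$ is by definition generated by $\{\alpha(N):\alpha\in N^*\}$ and that an ideal $J$ is a trace ideal if and only if $\Hom_R(J,R)=\Hom_R(J,J)=(J:_QJ)$, equivalently $J=JJ^{-1}$; since $\Tr(N)$ is a trace ideal, applying $\Tr$ again fixes it. Concretely one shows $\Tr(\Tr(N))\subseteq\Tr(N)$ because any map $\Tr(N)\to R$ restricted to the image of an $\alpha\in N^*$ composes to a map $N\to R$, hence lands in $\Tr(N)$; the reverse inclusion is the general fact $I\subseteq\Tr(I)$ noted before the lemma. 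For (2), an isomorphism $\varphi\colon I\xrightarrow{\sim}J$ induces a bijection $N^*\ni\alpha\mapsto\alpha\circ\varphi^{-1}\in\Hom_R(J,R)$ wait — I mean it induces $\Hom_R(I,R)\xrightarrow{\sim}\Hom_R(J,R)$, $\beta\mapsto\beta\circ\varphi^{-1}$, and since $\varphi$ is multiplication by a fixed unit of $Q$ (any $R$-linear self-map of a rank-one torsion-free module over a domain is multiplication by an element of $Q$), the images $\beta(I)$ and $(\beta\varphi^{-1})(J)$ coincide as submodules of $Q$; summing gives $\Tr(I)=\Tr(J)$.

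For (3), the containment $\Tr(I)\supseteq II^{-1}$ is immediate since for each $y\in I^{-1}$ the map $x\mapsto xy$ lies in $\Hom_R(I,R)$ and has image $yI$. For the reverse, any $\alpha\in\Hom_R(I,R)$ is, after tensoring with $Q$, multiplication by some $q\in Q$, and $\alpha(I)=qI\subseteq R$ forces $q\in I^{-1}$, so $\alpha(I)\subseteq II^{-1}$; summing yields $\Tr(I)\subseteq II^{-1}$. Part (4) then follows from (3) by applying the valuation: $v(\Tr(I))=v(II^{-1})=v(I)+v(I^{-1})$, where the last equality uses that $v$ is a valuation and that for fractional ideals $v(IJ)=v(I)+v(J)$ — indeed $v(IJ)\le v(I)+v(J)$ is clear from products of minimal-valuation elements, and the reverse holds because $IJ\subseteq\overline R\cdot(t^{v(I)})\cdot(t^{v(J)})$ after extending to $\overline R$, or more directly because every element of $IJ$ is a sum of products each of valuation $\ge v(I)+v(J)$. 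Finally, for (5): given $f\colon M\to R$ nonzero with $I=f(M)$, note $M$ has rank one, so $M$ is isomorphic to a fractional ideal $M'\subseteq Q$; under this identification $f$ becomes multiplication by some $q\in Q$ with $I=qM'\cong M'$ as $R$-modules, and then $\Tr(M)=\Tr(M')=\Tr(I)$ by (2).

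I do not expect a genuine obstacle here — these are all standard manipulations — but the step that most wants care is the passage ``$R$-linear maps of rank-one torsion-free $R$-modules are multiplication by an element of $Q$,'' which underpins (2), (3), and (5); it requires knowing $M\otimes_R Q$ is one-dimensional over $Q$ and that $M\hookrightarrow M\otimes_R Q=Q$ since $M$ is torsion-free (here torsion-freeness of $\Omega_R$ is not needed for the lemma, only of the fractional ideals in play, which are torsion-free as submodules of $Q$). The only other point meriting a sentence is that in (4) one must ensure $I^{-1}\ne 0$, which holds because $I$ is a nonzero fractional ideal of a one-dimensional domain, so $v(I^{-1})$ is a well-defined integer.
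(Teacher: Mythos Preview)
Your arguments for (1)--(4) are correct and considerably more self-contained than the paper's, which handles (2) by the same one-line composition argument you give, says (1) ``follows a similar reasoning,'' cites \cite{kobayashi2019rings} for (3), cites \cite{maitra2020partial} for (5), and declares (4) immediate from (3). So on those parts you are supplying the direct elementary proofs that the paper outsources; this buys independence from the references at the cost of a few extra lines, and your valuation computation in (4) is more explicit than the paper's.

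There is, however, a genuine gap in your treatment of (5). You write ``$M$ has rank one, so $M$ is isomorphic to a fractional ideal $M'\subseteq Q$,'' but this inference requires $M$ to be torsion-free, which is \emph{not} assumed in the statement. In the paper's intended application the lemma is invoked with $M=\Omega_R$, which may well have nonzero torsion (indeed, whether it does is precisely the content of Berger's conjecture), so this is not a harmless omission. The repair is short: since $R$ is a domain, every $\alpha\in M^*$ kills the torsion submodule $T(M)$, whence $\Tr(M)=\Tr(M/T(M))$; now $M/T(M)$ is torsion-free of rank one, embeds in $Q$ as a fractional ideal $M'$, the given $f$ factors through $M/T(M)$ and becomes multiplication by some $q\in Q$, and then $I=qM'\cong M'$ so that (2) gives $\Tr(M)=\Tr(M')=\Tr(I)$. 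Your closing parenthetical (``torsion-freeness \ldots\ only of the fractional ideals in play'') shows you were aware of the issue but did not actually insert the torsion-quotient step where (5) needs it.
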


\begin{proof}
    Suppose $\phi:I\to J$ be the isomorphism. Then by composing any map $\alpha:J\to R$ with $\phi$ we get a map from $I\to R$. Thus, $\Tr(J)\subseteq \Tr(I)$. Now a symmetric argument finishes the proof of $(2)$. Statement $(1)$ also follows a similar reasoning.

    Statement $(3)$ appears as \cite[Proposition 2.4(2)]{kobayashi2019rings} whereas statement $(5)$ is \cite[Proposition 3.5]{maitra2020partial}. 

    Statement $(4)$ is immediate from $(3)$.
\end{proof}

\subsection{Fractional Ideal of Derivatives}
Let $R=\k\ps{x_1,\ldots,x_n}, n\geq 2$ where each $x_i=\alpha_it^{a_i}$ where $a_1<a_2<\ldots<a_n$ as in our original setup. Let $\Omega_R$ be the module of differentials. From \cite[Section 2.3]{huneke2021torsion} and ignoring $dt$, we see that $\Omega_R$ surjects to the $R$-span of $x_i'(t):=\frac{dx_i}{dt}$. This is clearly a fractional ideal and we call it the fractional ideal of derivatives. Henceforth we shall use the following notation -- $$\displaystyle \mathcal{D}:=Rx_1'(t)+Rx_2'(t)+\cdots +Rx_n'(t).$$ Notice that some power series of high enough valuation multiplies $\mathcal{D}$ into $R$ and hence we get an ideal of $R$ to which $\Omega_R$ surjects. This ideal of course is isomorphic to $\mathcal{D}$ as the map is simply by multiplication by a non-zero element of $Q$. For instance, $t^{c_R}\mathcal{D}$ is such an ideal, where $c_R=v(\cC_R)$.

\subsection{The $\h(\cdot)$ Invariant}
\begin{definition}\cite[Definition 2.1]{maitra2020partial}
Let $R$ be a one dimensional local domain and $M$ be a finitely generated $R$-module. Then $$\h(M)=\min\{\ell(R/I)\mid \text{there exists a homomorphism}~ \phi:M\to R~\text{with~}\phi(M)=I\}$$
where $\ell(\cdot)$ denotes length. 
\end{definition}

Notice that $\h(\cdot)$ is a non-negative integer or takes the value $\infty$.

\begin{lemma} \cite[Theorem 5.2]{maitra2020partial}\label{hunexplicit}
    Let $R,\Omega_R,\mathcal{D}$ be as discussed above. Then $$\h(\Omega_R)=\ell(\overline{R}/\mathcal{D})-\ell(\overline{R}/R)+v(\mathcal{D}^{-1})\leq v(\mathcal{D}^{-1}).$$
\end{lemma}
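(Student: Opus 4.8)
The plan is to replace $\Omega_R$ by the fractional ideal $\mathcal D$, for which $\h(\cdot)$ can be computed by an explicit length count inside $\overline R$, and then to establish the displayed inequality separately. First I would show $\h(\Omega_R)=\h(\mathcal D)$. Since $R$ is a one-dimensional domain over a field of characteristic $0$, its fraction field $Q$ is separably generated over $\k$, so $\Omega_R$ has rank one; hence the natural surjection $\nu\colon\Omega_R\twoheadrightarrow\mathcal D$ sending $dx_i\mapsto x_i'(t)$ has torsion kernel. As $R$ is torsion-free, every $\phi\in\Hom_R(\Omega_R,R)$ annihilates $\ker\nu$ and therefore factors as $\phi=\psi\circ\nu$ with $\psi\colon\mathcal D\to R$; conversely any $\psi\colon\mathcal D\to R$ pulls back along $\nu$. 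Because $\nu$ is onto, $\phi(\Omega_R)=\psi(\mathcal D)$, so $\Omega_R$ and $\mathcal D$ realize exactly the same family of image ideals, giving $\h(\Omega_R)=\h(\mathcal D)$.

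Next I would compute $\h(\mathcal D)$. Being a fractional ideal of rank one, $\mathcal D$ has $\Hom_R(\mathcal D,R)=\mathcal D^{-1}=R:_Q\mathcal D$, a map being multiplication by some $q\in\mathcal D^{-1}$ with image $q\mathcal D\subseteq R$; thus $\h(\mathcal D)=\min\{\ell(R/q\mathcal D)\mid 0\neq q\in\mathcal D^{-1}\}$. One first records $\mathcal D\subseteq\overline R$, since $v(x_i'(t))=a_i-1\geq 0$ in characteristic $0$. For fixed $q$, additivity of length along $q\mathcal D\subseteq R\subseteq\overline R$ gives $\ell(R/q\mathcal D)=\ell(\overline R/q\mathcal D)-\ell(\overline R/R)$, while additivity along $q\mathcal D\subseteq q\overline R\subseteq\overline R$, the identity $\ell(\overline R/q\overline R)=v(q)$, and the isomorphism $\overline R/\mathcal D\xrightarrow{\cdot q}q\overline R/q\mathcal D$ give $\ell(\overline R/q\mathcal D)=v(q)+\ell(\overline R/\mathcal D)$. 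Combining,
\[
\ell(R/q\mathcal D)=v(q)+\ell(\overline R/\mathcal D)-\ell(\overline R/R),
\]
and minimizing over $q$ replaces $v(q)$ by $v(\mathcal D^{-1})=\min\{v(q)\mid 0\neq q\in\mathcal D^{-1}\}$, which is exactly the claimed equality.

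Finally, for the inequality $\h(\Omega_R)\leq v(\mathcal D^{-1})$ it suffices to prove $\ell(\overline R/\mathcal D)\leq\ell(\overline R/R)$. Here I would use the $\k$-linear derivation $D=d/dt$ on $\overline R=\k\ps{t}$, which is surjective in characteristic $0$. The chain rule gives $D(r)=\sum_i\frac{\partial r}{\partial x_i}\,x_i'(t)\in\mathcal D$ for every $r\in R$ (as $\frac{\partial r}{\partial x_i}\in R$), so $D(R)\subseteq\mathcal D$. Hence the composite $\overline R\xrightarrow{D}\overline R\twoheadrightarrow\overline R/\mathcal D$ is surjective with kernel $D^{-1}(\mathcal D)\supseteq R$, whence $\ell(\overline R/\mathcal D)=\ell(\overline R/D^{-1}(\mathcal D))\leq\ell(\overline R/R)$, as needed.

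I expect the main obstacle to be the first step: verifying that the passage from $\Omega_R$ to $\mathcal D$ is genuinely image-preserving, i.e. that $\ker\nu$ is torsion (forcing the rank-one computation for $\Omega_R$) and is killed by every homomorphism into $R$, together with the finiteness and additivity of all the lengths in the count. The characteristic-zero hypothesis enters twice in an essential way — ensuring $a_i\neq 0$ so that $\mathcal D\subseteq\overline R$, and ensuring $D$ is surjective in the inequality step.
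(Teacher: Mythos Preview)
The paper itself does not prove this lemma; it is quoted verbatim as \cite[Theorem 5.2]{maitra2020partial} and then used as a black box in the proof of the second main theorem. So there is no in-paper argument to compare against.

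Your proposed proof is essentially correct and gives a clean, self-contained derivation. The reduction $\h(\Omega_R)=\h(\mathcal D)$ via the torsion kernel of $\nu$ is sound (rank one plus $R$ torsion-free forces every $\phi\colon\Omega_R\to R$ to factor through $\mathcal D$), and the inequality step using the surjectivity of $d/dt$ in characteristic~$0$ together with $D(R)\subseteq\mathcal D$ is correct once one notes that $R$-length equals $\k$-dimension here.

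One small point in the length computation: when you invoke the chain $q\mathcal D\subseteq q\overline R\subseteq\overline R$ you are implicitly assuming $v(q)\geq 0$, i.e.\ $q\in\overline R$, which you have not verified for arbitrary $q\in\mathcal D^{-1}$. This is easily repaired. Either observe that if some $q\in\mathcal D^{-1}$ had $v(q)\leq 0$ then $v(q\mathcal D)=v(q)+a_1-1<a_1=v(\m)$, forcing $q\mathcal D$ to contain a unit of $R$ and hence $\mathcal D\cong R$, contradicting non-regularity; or, more directly, run the additivity along the chain $q\mathcal D\subseteq R\subseteq\overline R\subseteq q\overline R$ (valid when $v(q)<0$) to obtain the same formula $\ell(R/q\mathcal D)=v(q)+\ell(\overline R/\mathcal D)-\ell(\overline R/R)$ regardless of the sign of $v(q)$. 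With that adjustment the argument is complete.
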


\begin{theorem}
    Let $(R,\m,\k)$ be a non-regular complete local one dimensional domain with $\k$ algebraically closed of characteristic $0$. Let $\Omega_R$ be the module of differentials and $\overline{R}=\k\ps{t}$ with valuation function $v$. Then $R$ is quasihomogeneous if and only if $$v(\Tr(\Omega_R))=v(\m).$$
\end{theorem}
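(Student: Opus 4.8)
The plan is to prove both implications by relating $\Tr(\Omega_R)$ to $\Tr(\mathcal{D})$ and then computing valuations. The forward direction is essentially the observation recorded in the introduction: if $R$ is quasihomogeneous there is a surjection $\Omega_R \twoheadrightarrow \m$, so $\m$ is the image of a map $\Omega_R \to R$, hence $v(\Tr(\Omega_R)) \leq v(\m)$; since $R$ is non-regular, $\m$ is not principal in $\overline{R}$-terms in the relevant sense and any proper image $I \subsetneq \m$ would force $\Tr(\Omega_R) \subsetneq \m$, and because $\m \subseteq \Tr(\m) \subseteq \Tr(\Omega_R)$ (using $\Tr$ monotonicity and $\Tr(\m) \supseteq \m$), one gets $v(\Tr(\Omega_R)) \leq v(\m)$ again together with $v(\Tr(\Omega_R)) \geq v(\m)$ coming from $\m \subseteq \Tr(\Omega_R)$. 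So I would first nail down that $v(\Tr(\Omega_R)) \geq v(\m)$ always (from the surjection-to-$\m$ failing is not needed here — rather from the fact that any map $\Omega_R \to R$ has image inside... no: the clean statement is $\Tr(\Omega_R) \subseteq \m$ since $\Omega_R$ has no free summand as $R$ is not regular, so $v(\Tr(\Omega_R)) \geq v(\m) = a_1$), and that quasihomogeneity gives the reverse inequality via the Euler map.

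\textbf{The substantive direction.} For the converse, suppose $v(\Tr(\Omega_R)) = v(\m) = a_1$. By \Cref{traceprop}(5), $\Tr(\Omega_R) = \Tr(\mathcal{D})$, and by \Cref{traceprop}(4), $v(\Tr(\mathcal{D})) = v(\mathcal{D}) + v(\mathcal{D}^{-1})$. Now $v(\mathcal{D}) = \min_i v(x_i'(t)) = \min_i(a_i - 1 + o(\alpha_i \text{ correction}))$; more precisely $v(x_i'(t)) = a_i - 1$ when $\alpha_i$ is a genuine unit times... one must be careful: $x_i = \alpha_i t^{a_i}$ with $\alpha_i(0) \neq 0$ gives $x_i'(t) = a_i \alpha_i t^{a_i - 1} + \alpha_i' t^{a_i}$, so $v(x_i'(t)) = a_i - 1$. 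Hence $v(\mathcal{D}) = a_1 - 1$. Therefore the hypothesis $a_1 = v(\Tr(\mathcal{D})) = (a_1 - 1) + v(\mathcal{D}^{-1})$ forces $v(\mathcal{D}^{-1}) = 1$. Then \Cref{hunexplicit} gives $\h(\Omega_R) = \ell(\overline{R}/\mathcal{D}) - \ell(\overline{R}/R) + 1 \leq v(\mathcal{D}^{-1}) = 1$, so $\h(\Omega_R) \in \{0, 1\}$. The case $\h(\Omega_R) = 0$ means some map $\Omega_R \to R$ is onto, i.e. $R$ is "free-like" — but $\Omega_R$ has rank $\geq 1$ and a surjection $\Omega_R \to R$ exists iff... actually $\h(M) = 0$ iff $\Tr(M) = R$ iff $R$ is a direct summand of $M$; since $R$ is not regular this should be ruled out, leaving $\h(\Omega_R) = 1$, which by definition means there is a map $\phi: \Omega_R \to R$ with $\ell(R/\phi(\Omega_R)) = 1$, i.e. $\phi(\Omega_R) = \m$. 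That is exactly a surjection $\Omega_R \twoheadrightarrow \m$, so $R$ is quasihomogeneous.

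\textbf{Anticipated obstacle.} The delicate point is the bookkeeping that converts $\h(\Omega_R) \le 1$ into the existence of a surjection onto $\m$ specifically, rather than onto some other ideal of colength $1$ — but every ideal $I \subseteq R$ with $\ell(R/I) = 1$ equals $\m$, so this is automatic once we know the image lies in $R$ (which it does, being the image of a map to $R$) and has colength exactly $1$. The other point needing care is excluding $\h(\Omega_R) = 0$: I would argue that $\h(\Omega_R) = 0$ would make $R$ a direct summand of $\Omega_R$, forcing $\Omega_R \cong R \oplus (\text{torsion})$, which by a standard argument (e.g. the structure of $\Omega_R$ for a non-regular ring, or directly: $v(\mathcal{D}) + v(\mathcal{D}^{-1}) = 0$ would mean $\mathcal{D}$ is principal generated by a unit of $\overline R$, i.e. $\mathcal{D} = R$, forcing $1 \in \mathcal{D}$ and hence $v(\mathcal{D}) = 0 \neq a_1 - 1$ since $a_1 \geq 2$) contradicts non-regularity; indeed $a_1 \geq 2$ because $R \subseteq \m_P^2$-type embedding ensures $a_1 \geq 2$, so $v(\mathcal{D}) = a_1 - 1 \geq 1 > 0$ and $\h(\Omega_R) = 0$ is impossible. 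Thus the proof reduces to the clean chain: $v(\Tr(\Omega_R)) = a_1 \iff v(\mathcal{D}^{-1}) = 1 \iff \h(\Omega_R) = 1 \iff$ quasihomogeneous, with the forward implication of the theorem supplying the remaining inequality.
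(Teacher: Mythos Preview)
Your proposal is correct and follows essentially the same route as the paper: identify $\Tr(\Omega_R)=\Tr(\mathcal D)$, compute $v(\mathcal D)=a_1-1$ so that the hypothesis forces $v(\mathcal D^{-1})=1$, feed this into \Cref{hunexplicit} to get $\h(\Omega_R)\leq 1$, and rule out $\h(\Omega_R)=0$ by non-regularity to conclude that some image of $\Omega_R$ in $R$ has colength $1$, hence equals $\m$. The only organizational difference is in the forward direction: the paper uses \Cref{traceprop}(5) to get $\Tr(\Omega_R)=\Tr(\m)$ and then argues $\Tr(\m)=\m$ (else $\m$ would have a free summand), whereas you sandwich $\m\subseteq\Tr(\Omega_R)\subseteq\m$ using that $\Omega_R$ itself has no free summand --- both are fine, and both ultimately rest on the same fact that $\h(\Omega_R)=0$ forces regularity. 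Your parenthetical ``direct'' exclusion of $\h(\Omega_R)=0$ via $v(\mathcal D)+v(\mathcal D^{-1})=0$ is muddled (that equation does not by itself force $\mathcal D=R$), but it is unnecessary: under the standing hypothesis $v(\Tr(\Omega_R))=a_1\geq 2$, a surjection $\Omega_R\to R$ would give $\Tr(\Omega_R)=R$ and $v(\Tr(\Omega_R))=0$, an immediate contradiction.
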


\begin{proof}
Throughout the proof, we assume that $R=\k\ps{x_1,\ldots, x_n}, n\geq 2$ with $x_i=\alpha_it^{a_i}$, $a_1<a_2<\cdots<a_n$.

    Assume that $R$ is quasihomogeneous. So there exists a surjection from $\Omega_R$ to $\m$. Since $\Rank(\Omega_R)=1$, we have that $\Tr(\Omega_R)=\Tr(\m)$ by \Cref{traceprop}$(5)$. Now $\m\subseteq \Tr(\m)$ and if the latter is equal to $R$, then $\m$ has a free direct summand by \cite[Proposition 2.8(iii)]{LindoTrace}, contradicting the non-regularity assumption of $R$. Thus $\Tr(\m)=\m$ and hence $v(\Tr(\Omega_R))=v(\m)$. 

    Conversely, suppose that $v(\Tr(\Omega_R))=v(\m)=v(x_1)$. In order to prove that $R$ is quasihomogeneous, it is enough to show that $\hun=1$: for suppose $1=\hun=\ell(R/I)$ for some ideal $I$ which is a surjective image of $\Omega_R$.  Now $I\subseteq \m\subseteq R$ and hence $1=\ell(R/I)\geq \ell(R/\m)=1$ which implies that $I=\m$.
    
    By \Cref{traceprop}, we get that $\Tr(\Omega_R)=\Tr(\mathcal{D})=\mathcal{D}\mathcal{D}^{-1}$ and hence $v(\Tr(\Omega_R))=v(\mathcal{D})+v(\mathcal{D}^{-1})$. Since $\k$ is of characteristic $0$, we get that $v(\mathcal{D})=v(x_1'(t))=v(x_1)-1=v(\m)-1$. Using this and the hypothesis, we get that $v(\mathcal{D}^{-1})=v(\Tr(\Omega_R))-v(\mathcal{D})=v(\m)-v(\m)+1=1$. 

    Thus, using \Cref{hunexplicit}, we get that $$\hun=\ell(\overline{R}/\mathcal{D})-\ell(\overline{R}/R)+1\leq 1.$$ 
    If $\hun=0$, then $R$ is regular (for instance, see \cite[Remark 4.2]{maitra2020partial}), a contradiction. Thus $\hun=1$, thereby finishing the proof. 
\end{proof}

\bibliographystyle{plain}
\bibliography{references}
 \end{document}